\theoremstyle{plane}
\newtheorem{thm}{Theorem}[section]
\newtheorem{lem}[thm]{Lemma}
\newtheorem{cor}[thm]{Corollary}
\theoremstyle{definition}
\newtheorem*{rmk}{Remark}
\newtheorem*{ack}{Acknowledgement}
\newtheorem{example}[thm]{Example}
\newcommand{\Z}{\mathbb{Z}}
\newcommand{\N}{\mathbb{N}}
\newcommand{\C}{\mathbb{C}}
\newcommand{\B}{\mathcal{B}}
\newcommand{\floor}[1]{\left\lfloor#1\right\rfloor}
\title{Stability of Springer representations for type $A$}
\author{Dongkwan Kim}
\address{Department of Mathematics\\
  Massachusetts Institute of Technology\\
  Cambridge, MA 02139-4307\\
  U.S.A.}
\email{sylvaner@math.mit.edu}
\date{\today}							
\begin{document}
\begin{abstract} 
We prove certain stability properties of Springer representations for type $A$.
\end{abstract}

\maketitle

\renewcommand\contentsname{}
\tableofcontents

\section{Introduction}
In his paper \cite{spr}, Springer defined a certain representation of the Weyl group of a reductive group on the cohomology of a set of fixed points in the flag variety by a nilpotent element, which we now call the Springer representation. In particular, for various sets of such fixed points, called Springer fibers, the top degree cohomology gives all the irreducible representations of the corresponding Weyl group. 

Consider Springer representations of type $A_{n-1}$, i.e. of the symmetric group of $n$ elements. In this case, if we choose a nilpotent element parametrized by a partition $\lambda$, then it is known that the top degree cohomology of the Springer fiber corresponding to this nilpotent element gives the irreducible representation of the symmetric group also parametrized by $\lambda$.

In this paper we prove some stability conditions of Springer representations of type $A$. To be precise, we prove that if some partitions $\lambda, \lambda'$ of $n$ satisfy certain conditions, then the cohomology of the corresponding Springer fibers on each degree $k$ give the same representation of the symmetric group. Also we prove that there exists a polynomial, depending on the degree $k$, such that the $k$-th Betti number of such Springer fiber is the value of this polynomial evaluated at $n$.

There exists a notion of stability defined by \cite{c-f} and \cite{cef}. In this language \cite{mt} proved another version of stability conditions of Springer representations. Note that our result is related to but different from \cite{mt}. We remark the connection between our result and \cite{mt} at the end of this paper.

\begin{ack} The author thanks George Lusztig for useful comments. In particular, monotonicity property of Kostka-Foulkes polynomials was kindly informed to the author by him. 
\end{ack}

\section{Notations and preliminaries}
We denote by $\B_n$ the flag variety of $GL_n(\C)$, i.e. the set of complete flags in $\C^n$. We define $W_n$ to be the symmetric group of $n$ elements, which is isomorphic to the Weyl group of $GL_n(\C)$.

Let $\lambda$ be a partition. We write $\lambda = (\lambda_1, \cdots, \lambda_r)$ to describe each part where $\lambda_1 \geq \cdots \geq \lambda_r \geq 0$, where we allow some $\lambda_i$ to be zero. We also write $\lambda=(1^{r_1}2^{r_2}\cdots)$ when $\lambda$ consists of $r_1$ parts of 1, $r_2$ parts of 2, etc. We let $|\lambda|\colonequals \sum_{i=1}^r \lambda_i$ and also write $\lambda \vdash n$ if $|\lambda|=n$. If $\lambda, \mu$ are partitions, then we write $\lambda \supset \mu$ if $\lambda_i \geq \mu_i$ for any $i$. It is equivalent to that the Young diagram of $\lambda$ contains that of $\mu$. Also when $|\lambda|=|\mu|$, we write $\lambda \geq \mu$ if $\lambda$ is greater than equal to $\mu$ with respect to dominance order.  For a partition $\lambda=(\lambda_1, \cdots, \lambda_r)$ such that $\lambda_i \geq 1$, we let $\lambda^{(i)}$ be the partition whose parts are $\lambda_1, \cdots, \lambda_{i-1}, \lambda_i -1, \lambda_{i+1}, \cdots, \lambda_r$. It corresponds to removing a box on the $i$-th row of the Young diagram of $\lambda$ (and rearranging rows if necessary.)

Recall that there is a one-to-one correspondence between conjugacy classes of unipotent elements in $GL_n(\C)$ and partitions of $n$. The correspondence sends a conjugacy class to the partition given by taking the sizes of Jordan blocks of any unipotent element in the class. We define $u_\lambda$ to be such unipotent element corresponding to $\lambda \vdash n$, which is well-defined up to conjugation by $GL_n(\C)$. We let $H^{k}(\lambda)$ to be the $k$-th cohomology of the Springer fiber corresponding to $u_\lambda$, i.e. the set of complete flags in $\C^n$ which is stable under the action of $u_\lambda$. By Springer theory, we may regard $H^k(\lambda)$ as a $W_n$-module. Also we define $h^k(\lambda) \colonequals \dim H^k(\lambda)$ to be the $k$-th Betti number of the Springer fiber corresponding to $\lambda$.

%
%


\section{Stability of dimension}
Suppose we are given $r \in \Z_{\geq2}, k\in \N$. We let $A_{k,r} = (\alpha^{k,r}_{1}, \cdots, \alpha^{k,r}_{r})$ be such that
$$\alpha^{k,r}_{1}=\alpha^{k,r}_{2} = \floor{\frac{k+r-2}{r-1}}, \alpha^{k,r}_{3} = \floor{\frac{k+r-3}{r-1}}, \alpha^{k,r}_{4} = \floor{\frac{k+r-4}{r-1}}, \cdots, \alpha^{k,r}_{r} = \floor{\frac{k}{r-1}}.$$
Thus $A_{k,r}$ is of the form
$$A_{k,r} = (a_r^{k,r}+1, \cdots, a_r^{k,r}+1, a_r^{k,r}, \cdots, a_r^{k,r})$$
with $j$ parts of $a_r^{k,r}+1$, where $0 \leq j \leq r-1$ and $j \neq 1$. Note that $A_{k,r} \supset A_{k',r}$ if $k \geq k'$.

By \cite{spal:1976}, Springer fibers for type $A$ have vanishing odd cohomologies. (Indeed, it is still true for other types.) Here we prove the following stability property of even Betti numbers of Springer fibers as follows.
\begin{thm} \label{thm:dim}For any $r \in \Z_{\geq2}, k \in \N$, there exists a polynomial $f_{k,r}(x)$ of degree $k$ with leading coefficient $\frac{1}{k!}$ which satisfies the following property; if $\lambda =(\lambda_1, \cdots, \lambda_r)$ is a partition with $\leq r$ parts such that $\lambda \supset A_{k,r}$, then $h^{2k}(\lambda) = f_{k,r}(|\lambda|)$.
\end{thm}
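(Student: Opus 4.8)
The plan is to translate the Betti numbers into coefficients of Kostka--Foulkes polynomials and then to prove that these coefficients stabilise in low degree. By Springer theory together with the Hotta--Springer description of the graded character, the graded multiplicity of the irreducible $W_n$-representation $V^\mu$ (with $\mu \vdash n$, $n=|\lambda|$) in $\bigoplus_k H^{2k}(\lambda)$ is a Kostka--Foulkes polynomial. Writing $f^\mu = \dim V^\mu$ and normalising $\tK_{\mu\lambda}(q)$ so that the trivial representation $V^{(n)}$ occurs in degree $q^0$ and $V^\lambda$ in the top degree $q^{n(\lambda)}$, where $n(\lambda)=\sum_i(i-1)\lambda_i$ is the complex dimension of the corresponding Springer fiber, one has
$$\sum_k h^{2k}(\lambda)\,q^k=\sum_{\mu\vdash n} f^\mu\,\tK_{\mu\lambda}(q),\qquad h^{2k}(\lambda)=\sum_{\mu\vdash n} f^\mu\,[q^k]\tK_{\mu\lambda}(q).$$
First I would record the standard facts that the lowest power of $q$ occurring in $\tK_{\mu\lambda}$ is $q^{n(\mu)}$ and that its coefficient there equals $1$. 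Hence only $\mu$ with $n(\mu)\le k$ contribute to $[q^k]$; since $n(\mu)\ge n-\mu_1$, every such $\mu$ has the form $\mu=(n-j,\nu)$ with $\nu\vdash j$ filling the rows below the first, and the constraint becomes $n(\nu)+j\le k$, so in particular $j\le k$. By the hook-length formula, $f^{(n-j,\nu)}$ is a polynomial in $n$ of degree exactly $j$ with leading coefficient $f^\nu/j!$.

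The heart of the argument is the following stabilisation statement: for every $\lambda$ with at most $r$ parts satisfying $\lambda\supset A_{k,r}$, the coefficient $[q^k]\tK_{(n-j,\nu),\lambda}(q)$ depends only on $k$ and $\nu$, not on $\lambda$. Granting this, $h^{2k}(\lambda)=\sum_{j\le k}\sum_{\nu\vdash j} c_{k,\nu}\,f^{(n-j,\nu)}$ is a fixed $\Q$-linear combination of the polynomials $f^{(n-j,\nu)}$ of degree $\le k$, hence a polynomial $f_{k,r}(n)$ in $n=|\lambda|$ of degree $\le k$. For the leading term, degree $k$ is attained only when $j=k$, and then $n(\nu)+k\le k$ forces $\nu=(k)$; so the unique degree-$k$ contribution is $f^{(n-k,k)}\cdot[q^k]\tK_{(n-k,k),\lambda}(q)$. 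Here the design of $A_{k,r}$ is used: the Hermite-type identity $\sum_{i=2}^r\alpha^{k,r}_i=k$ gives $\lambda_2+\cdots+\lambda_r\ge k$, hence $\lambda_1\le n-k$ and $(n-k,k)\ge\lambda$ in dominance order, so $V^{(n-k,k)}$ genuinely occurs; and the stabilised value of its lowest-degree multiplicity is the coefficient $1$ recorded above. Since $f^{(n-k,k)}$ has leading coefficient $f^{(k)}/k!=1/k!$, the polynomial $f_{k,r}$ has degree exactly $k$ and leading coefficient $1/k!$.

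The main obstacle is the stabilisation statement, and this is exactly where the monotonicity property of Kostka--Foulkes polynomials communicated by Lusztig enters. The strategy I would follow is to compare $\tK_{\mu\lambda}$ with $\tK_{\mu^+\lambda^+}$, where $\lambda^+$ is obtained from $\lambda$ by enlarging a row (and $\mu^+$ correspondingly), showing that each fixed low-degree coefficient is non-decreasing under such an enlargement while being bounded a priori, so that it is eventually constant. I expect the two genuinely delicate points to be (i) establishing the monotonicity in precisely the form needed, uniformly over degrees $\le k$, and (ii) verifying that the explicit partition $A_{k,r}$ — and not merely some unspecified large partition — already lies beyond the stabilisation threshold; for the latter the two equal top parts $\alpha^{k,r}_1=\alpha^{k,r}_2$ together with the identity $\sum_{i\ge2}\alpha^{k,r}_i=k$ should be exactly what is required. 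A complementary route, which I would use to cross-check the threshold, is the stratification recursion expressing $\sum_k h^{2k}(\lambda)q^k$ through the Poincar\'e polynomials of the Springer fibers for the partitions $\lambda^{(i)}$ obtained by removing a corner box; inducting on $|\lambda|$ with this recursion should reduce stabilisation in degree $k$ for $\lambda$ to stabilisation in degrees $\le k$ for the smaller partitions $\lambda^{(i)}$.
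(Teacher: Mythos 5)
Your reduction of the theorem to Kostka--Foulkes polynomials is set up correctly (the identity $h^{2k}(\lambda)=\sum_{\mu}f^\mu\,[q^k]\tK_{\mu\lambda}(q)$, the fact that only $\mu=(n-j,\nu)$ with $n(\nu)+j\le k$ contribute, the hook-length estimate on $f^{(n-j,\nu)}$, and the identity $\sum_{i\ge 2}\alpha_i^{k,r}=k$ are all right), but the proof has a genuine gap at exactly the point you call its heart: the claim that $[q^k]\tK_{(n-j,\nu),\lambda}(q)$ is the same for all $\lambda\supset A_{k,r}$ with $\le r$ parts, and depends only on $k$ and $\nu$. This claim is not a lemma you can grant; it is at least as strong as the theorem you are proving (it is essentially the conjunction of this theorem with the paper's main theorem on stability of the representations $H^{2k}(\lambda)$, and its independence of $n$ is the representation-stability statement of Church--Farb/Macdonald type). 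Moreover, the strategy you sketch for it does not go through as stated. The available monotonicity of Kostka--Foulkes polynomials (Braden--MacPherson, via the Kazhdan--Lusztig interpretation) compares $\tK_{\mu\lambda}$ and $\tK_{\mu\lambda'}$ for partitions $\lambda\ge\lambda'$ of the \emph{same} $n$ in dominance order; your comparison of $\tK_{\mu\lambda}$ with $\tK_{\mu^+\lambda^+}$ is across different values of $n$, where no such monotonicity is on record. Even if one had monotonicity, ``non-decreasing and bounded, hence eventually constant'' needs an a priori bound that you do not supply (and which is not obvious, since these coefficients do grow with $k$), it would only give stabilization along a chain rather than equality across the incomparable partitions $\lambda\supset A_{k,r}$ of a fixed $n$, and it would give no control of the threshold --- identifying the threshold as $A_{k,r}$ is precisely the combinatorial content of the theorem, as you yourself acknowledge in your points (i) and (ii).

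The route you mention only as a ``cross-check'' is in fact the paper's entire proof, and it is both elementary and complete: fix $r$ and induct on $k$, using the box-removal recursion $h^{2k}(\lambda)=h^{2k}(\lambda^{(1)})+h^{2k-2}(\lambda^{(2)})+\cdots+h^{2k-2r+2}(\lambda^{(r)})$. The two substantive verifications are (a) that whenever $\lambda\supsetneq A_{k,r}$ and $\lambda^{(i)}$ is defined, one has $\lambda^{(i)}\supset A_{k-i+1,r}$, so the induction hypothesis applies to every term with $i\ge 2$ (and terms with $\lambda_i=0$ have $2k-2i+2<0$, hence vanish); and (b) that $\lambda^{(1)}\supset A_{k,r}$, so the recursion can be iterated along the first row until $\lambda$ shrinks to $A_{k,r}$ itself, yielding $h^{2k}(\lambda)=h^{2k}(A_{k,r})+\sum_{i=|A_{k,r}|}^{|\lambda|-1}\bigl(f_{k-1,r}(i)+\cdots+f_{k-r+1,r}(i)\bigr)$, which is visibly a polynomial in $|\lambda|$ of degree $k$ with leading coefficient $\frac{1}{k!}$. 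If you want to salvage your write-up, the shortest repair is to discard the Kostka--Foulkes stabilization claim and carry out this induction in full; your degree and leading-coefficient analysis can then be kept as an independent consistency check on $f_{k,r}$.
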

Thus in particular, if $\lambda, \mu$ are partitions of $\leq r$ parts such that $|\lambda|=|\mu|$ and $\lambda, \mu \supset A_{k,r}$, then $h^{2k}(\lambda) = h^{2k}(\mu)$. Note that if $\lambda = (n)$ is a partition with 1 part, then $h^{2k}(\lambda) = \delta_{k,0}$. Thus if we set $A_{k,1} \colonequals (1)$ and $f_{k,1}(x)\colonequals \delta_{k,0}$, then the theorem is true even when $r=1$ (except that for $k\neq 0$, $f_{k,1}(x)=0$ is no longer a polynomial of degree $k$).

\begin{proof} We fix $r \in \Z_{\geq 2}$ and prove the statement by induction on $k$. If $k=0$, then $h^{0}(\lambda)=1$ for any partition $\lambda$, thus we simply take $f_{0,r}(x)\colonequals1$. Now suppose that $k \geq 1$ and that the statement is true up to $k-1$. We claim that
\begin{align*}f_{k,r}(x) \colonequals& h^{2k}(A_{k,r})+\sum_{i=|A_{k,r}|}^{x-1} \left( f_{k-1,r}(i)+\cdots+f_{k-r+1,r}(i)\right) 
\\=& h^{2k}(A_{k,r})+\sum_{i=|A_{k,r}|}^{x-1} \sum_{j=k-r+1}^{k-1}f_{j,r}(x)
\end{align*}
satisfies the desired condition. (If $k <0$, we use convention that $f_{k,r}(x)\colonequals 0$.) By induction assumption it is clearly a polynomial of degree $k$ with leading coefficient $\frac{1}{k!}$.

In order to prove the statement, suppose $\lambda = (\lambda_1, \cdots, \lambda_r)$ is a partition with $\leq r$ parts such that $\lambda \supset A_{k,r}$. Then necessarily $|\lambda| \geq A_{k,r}$. If $|\lambda| = |A_{k,r}|$, then $\lambda$ should be equal to $A_{k,r}$ and $h^{2k}(\lambda) = h^{2k}(A_{k,r}) = f_{k,r}(|\lambda|)$, thus the statement is satisfied. Thus from now on we assume  $\lambda \supsetneq A_{k,r}$. Then by \cite[4.5]{fresse}, (more generally, e.g. \cite[Theorem 3.3]{g-p}, \cite[Theorem 4.4]{dk:betti}) we have
$$h^{2k}(\lambda) = h^{2k}(\lambda^{(1)})+h^{2k-2}(\lambda^{(2)})+\cdots + h^{2k-2r+2}(\lambda^{(r)}).$$
Note that $\lambda^{(i)}$ is not defined if $\lambda_i =0$. But then $i \geq 3$ since $\lambda_1, \lambda_2 \geq \alpha_{1}^{k,r} = \alpha_2^{k,r}\geq 1$ (here we use the fact that $k\geq 1$). Also as $0=\lambda_i \geq \alpha^{k,r}_i = 0$, we have $\alpha^{k,r}_i =\floor{\frac{k+r-i}{r-1}}=0,$ which means $k+1<i$. Thus we may discard $h^{2k-2i+2}(\lambda^{(i)})$ from the equation since $2k-2i+2<0$.

If $\lambda^{(i)}$ is well-defined, we claim that $\lambda^{(i)} = (\lambda^{(i)}_1, \cdots, \lambda^{(i)}_r) \supset A_{k-i+1,r}$, i.e.
$$\lambda^{(i)}_1\geq \floor{\frac{k-i+r-1}{r-1}}, \qquad \lambda^{(i)}_s\geq \floor{\frac{k-i+r-s+1}{r-1}} \quad \textup{ for } 2 \leq s \leq r.$$
If $\lambda_i \gneq \alpha_i^{k,r}$, then it is obvious as $\lambda^{(i)} \supset A_{k,r} \supset A_{k-i+1,r}$. Thus suppose otherwise, i.e. $\lambda_i= \alpha_i^{k,r}$. Recall that there exists $0 \leq j \leq r-1$, $j\neq 1$ such that $A_{k,r} = ( (\alpha_{r}^{k,r})^{r-j} (\alpha_{r}^{k,r}+1)^{j})$. If $i=1$, then $j$ cannot be zero, since otherwise $\lambda=A_{k,r}= ( (\alpha_{r}^{k,r})^r)$ which contradicts the assumption that $\lambda \supsetneq A_{k,r}$. Thus the only possibility is that $A_{k,r}=( (\alpha_{r}^{k,r})^{r-j} (\alpha_{r}^{k,r}+1)^{j})$ with $j \geq 2$ and $\lambda=((\alpha_{r}^{k,r})^{r-k}(\alpha_{r}^{k,r}+1)^{k})$ with $j<k \leq r$. Now we have $\lambda^{(1)} = ((\alpha_{r}^{k,r})^{r-k+1}(\alpha_{r}^{k,r}+1)^{k-1}) \supset A_{k,r}$ as desired.

Now suppose $i \geq 2$. Again we write $A_{k,r} = ( (\alpha_{r}^{k,r})^{r-j} (\alpha_{r}^{k,r}+1)^{j})$ for some $0 \leq j \leq r-1$. If $\lambda_i = \alpha_{r}^{k,r}+1$, then it is obvious as $A_{k,r} \supsetneq A_{k-i+1,r}$. Thus suppose otherwise, i.e. $\lambda_i = \alpha_{r}^{k,r}$. But then $i>j$, and we have $A_{k-i+1,r}=( (\alpha_{r}^{k,r}-1)^{i-j} (\alpha_{r}^{k,r})^{r-i+j})$. Since $\lambda_i = \cdots = \lambda_r = \alpha_{r}^{k,r}$ as we assume that $\lambda \supset A_{k,r}$, this case is also clear.

Therefore, by induction hypothesis
$$h^{2k}(\lambda) = h^{2k}(\lambda^{(1)})+f_{k-1,r}(|\lambda|-1)+\cdots+f_{k-r+1,r}(|\lambda|-1).$$
As $\lambda^{(1)} \supset A_{k,r}$, we can iterate this process until we reach $A_{k,r}$. Then we have
$$h^{2k}(\lambda) = h^{2k}(A_{k,r})+\sum_{i=|A_{k,r}|}^{|\lambda|-1} \left(f_{k-1,r}(i)+\cdots+f_{k-r+1,r}(i)\right)= f_{k,r}(|\lambda|).$$
The theorem is proved.
\end{proof}

Suppose $k <r$. Then $A_{k,r}=(1^{k+1})$ and by following the proof above we see that $f_{k,r}(x)$ does not depend on $r$. Thus $f_k(x) \colonequals \lim_{r \rightarrow \infty} f_{k,r}(x)$ is well-defined. Now we have the following corollary.
\begin{cor} \label{cor:rinf} Suppose $\lambda=(\lambda_1, \cdots, \lambda_r) \vdash n$ be a partition such that $\lambda_1, \cdots, \lambda_{k+1} \geq 1$. Then $h^{2k}(\lambda)$ is equal to $f_{k}(|\lambda|)$. Also it is the same as $h^{2k}(\B_n)$.
\end{cor}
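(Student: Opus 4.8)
The plan is to deduce the corollary directly from Theorem~\ref{thm:dim} together with the $r$-stabilization of $f_{k,r}$ recorded just above it; the only new geometric input needed is that the Springer fiber of the identity is the whole flag variety.

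First I would treat the computation of $h^{2k}(\lambda)$. The hypothesis $\lambda_1, \cdots, \lambda_{k+1} \geq 1$ says precisely that $\lambda$ has at least $k+1$ nonzero parts, so in particular the index $r$ satisfies $r \geq k+1 > k$ (after appending zero parts I may assume $r \geq 2$ as well, which only matters in the edge case $k=0$). Since $k < r$ we have $A_{k,r} = (1^{k+1})$, and $\lambda \supset A_{k,r}$ is nothing but the condition $\lambda_1, \cdots, \lambda_{k+1} \geq 1$. Thus $\lambda$ is a partition with $\leq r$ parts satisfying $\lambda \supset A_{k,r}$, and Theorem~\ref{thm:dim} gives $h^{2k}(\lambda) = f_{k,r}(|\lambda|)$. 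Because $k < r$, the polynomial $f_{k,r}$ is independent of $r$ and equals $f_k$, so $h^{2k}(\lambda) = f_k(|\lambda|)$.

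Next I would identify the flag variety. Since $(1^n)$ corresponds to the identity unipotent $u_{(1^n)}$, every complete flag is $u_{(1^n)}$-stable, so the associated Springer fiber is all of $\B_n$; hence $h^{2k}((1^n)) = h^{2k}(\B_n)$. As $\lambda \vdash n$ has at least $k+1$ nonzero parts we have $n \geq k+1$, so $(1^n)$ also satisfies the hypothesis of the corollary. Applying the first step to $(1^n)$ gives $h^{2k}((1^n)) = f_k(n)$, and since $|\lambda| = n$ we conclude $h^{2k}(\B_n) = f_k(n) = f_k(|\lambda|) = h^{2k}(\lambda)$.

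I do not expect a serious obstacle, as the corollary is a formal consequence of the theorem; the one point I would make sure to justify is the $r$-independence of $f_{k,r}$ for $k < r$. This follows from the same induction on $k$ used in the proof of the theorem: when $k < r$ the data $A_{k,r} = (1^{k+1})$, $|A_{k,r}| = k+1$, and $h^{2k}(A_{k,r})$ carry no dependence on $r$, while the recursive term $\sum_{i=k+1}^{x-1}\sum_{j=k-r+1}^{k-1} f_{j,r}(i)$ involves only $f_{j,r}$ with $j \leq k-1 < r$ (the terms with $j < 0$ vanishing by convention), so the inductive hypothesis that each $f_{j,r}$ with $j<k$ is $r$-free in this range applies.
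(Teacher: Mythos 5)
Your proposal is correct and follows essentially the same route as the paper: the first claim is exactly the paper's application of Theorem \ref{thm:dim} in the range $k<r$ together with the $r$-independence of $f_{k,r}$ noted just before the corollary, and the second claim is the paper's "set $\lambda=(1^n)$," since $u_{(1^n)}$ is the identity and its Springer fiber is all of $\B_n$. Your additional care (padding with zero parts for the $k=0$, one-part edge case, checking $n\geq k+1$, and verifying via the recursion that $f_{k,r}$ is $r$-free for $k<r$) just fills in details the paper leaves as "obvious."
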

\begin{proof} The first part is obvious. The second part follows by setting $\lambda=(1^n)$.
\end{proof}

\begin{example} Here are some examples of $f_{k,r}(x)$. Also note that $f_{k,2}(x)$ is calculated in \cite[Proposition 8.1]{dk:betti}.
$$
\begin{array}{|c|c|c|c|c|c|}
\hline
r\diagdown k&0&1&2&3&4\\
\hline
2&1& x-1 & \frac{1}{2} \left(x^2-3 x\right) & \frac{1}{6} \left(x^3-6 x^2+5 x\right) & \frac{1}{24} \left(x^4-10 x^3+23 x^2-14 x\right) \\
\hline
3&1& x-1 & \frac{1}{2} \left(x^2-x-2\right) & \frac{1}{6} \left(x^3-13 x+6\right) & \frac{1}{24} \left(x^4+2 x^3-37 x^2+34 x\right) \\
\hline
4&1&x-1 & \frac{1}{2} \left(x^2-x-2\right) & \frac{1}{6} \left(x^3-7 x\right) & \frac{1}{24} \left(x^4+2 x^3-13 x^2-38 x+24\right) \\
\hline
5& 1&x-1 & \frac{1}{2} \left(x^2-x-2\right) & \frac{1}{6} \left(x^3-7 x\right) & \frac{1}{24} \left(x^4+2 x^3-13 x^2-14 x\right) \\
\hline
6&1& x-1 & \frac{1}{2} \left(x^2-x-2\right) & \frac{1}{6} \left(x^3-7 x\right) & \frac{1}{24} \left(x^4+2 x^3-13 x^2-14 x\right) \\
\hline
\cdots&\cdots &\cdots& \cdots& \cdots&\cdots\\
\hline
\end{array}
$$
$$
\hspace*{-0.3in}
\begin{array}{|c|c|c|}
\hline
r\diagdown k & 5& 6\\
\hline
2& \frac{1}{120} \left(x^5-15 x^4+65 x^3-105 x^2+54 x\right) & \frac{1}{720} \left(x^6-21 x^5+145 x^4-435 x^3+574 x^2-264 x\right) \\
\hline
3& \frac{1}{120} \left(x^5+5 x^4-75 x^3+55 x^2+134 x\right) & \frac{1}{720} \left(x^6+9 x^5-125 x^4-45 x^3+1204 x^2-1044 x\right) \\
\hline
4& \frac{1}{120} \left(x^5+5 x^4-15 x^3-185 x^2+194 x+120\right) & \frac{1}{720} \left(x^6+9 x^5-5 x^4-525 x^3+364 x^2+1596 x-720\right) \\
\hline
5& \frac{1}{120} \left(x^5+5 x^4-15 x^3-65 x^2-166 x+240\right) & \frac{1}{720} \left(x^6+9 x^5-5 x^4-165 x^3-1076 x^2+1956 x\right) \\
\hline
6& \frac{1}{120} \left(x^5+5 x^4-15 x^3-65 x^2-46 x+120\right) & \frac{1}{720} \left(x^6+9 x^5-5 x^4-165 x^3-356 x^2-204 x+720\right) \\
\hline
7& \frac{1}{120} \left(x^5+5 x^4-15 x^3-65 x^2-46 x+120\right) & \frac{1}{720} \left(x^6+9 x^5-5 x^4-165 x^3-356 x^2+516 x\right) \\
\hline
8& \frac{1}{120} \left(x^5+5 x^4-15 x^3-65 x^2-46 x+120\right) & \frac{1}{720} \left(x^6+9 x^5-5 x^4-165 x^3-356 x^2+516 x\right) \\
\hline
\cdots& \cdots & \cdots\\
\hline
\end{array}
$$
$$
\begin{array}{|c|c|}
\hline
r\diagdown k & 7\\\hline
2& \frac{1}{5040}(x^7-28 x^6+280 x^5-1330 x^4+3199 x^3-3682 x^2+1560 x )\\\hline
3& \frac{1}{5040}( x^7+14 x^6-182 x^5-490 x^4+5089 x^3-7084 x^2+2652 x )\\\hline
4& \frac{1}{5040}( x^7+14 x^6+28 x^5-1120 x^4-581 x^3+8666 x^2-1968 x )\\\hline
5& \frac{1}{5040}( x^7+14 x^6+28 x^5-280 x^4-3941 x^3+5306 x^2+8952 x )\\\hline
6& \frac{1}{5040}( x^7+14 x^6+28 x^5-280 x^4-1421 x^3-4774 x^2+11472 x+5040) \\\hline
7& \frac{1}{5040}( x^7+14 x^6+28 x^5-280 x^4-1421 x^3+266 x^2-3648 x+10080 )\\\hline
8& \frac{1}{5040}( x^7+14 x^6+28 x^5-280 x^4-1421 x^3+266 x^2+1392 x+5040 )\\\hline
9& \frac{1}{5040}( x^7+14 x^6+28 x^5-280 x^4-1421 x^3+266 x^2+1392 x+5040 )\\\hline
\cdots&\cdots\\\hline
\end{array}
$$
\end{example}

\section{Main theorem}
In the previous section we proved stability of Betti numbers of Springer fibers. Together with the lemma stated below, we may obtain the stability of Springer representations on each degree. 
\begin{lem} \label{lem:mono} Let $\lambda$, $\lambda'$, be partitions such that $|\lambda|=|\lambda'|$. If $\lambda \geq \lambda'$, then there is a injective homomorphism $H^{2k}(\lambda) \hookrightarrow H^{2k}(\lambda')$ of $W_{|\lambda|}$-modules.
\end{lem}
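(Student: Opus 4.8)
The plan is to reduce the existence of the $W_{|\lambda|}$-module injection to a coefficient-wise inequality between graded multiplicities, to identify those multiplicities with Kostka--Foulkes polynomials via Springer theory, and finally to invoke the monotonicity of Kostka--Foulkes polynomials in the dominance order (the input pointed out by Lusztig). Throughout set $n \colonequals |\lambda| = |\lambda'|$ and let $V_\mu$ denote the irreducible $W_n$-module indexed by $\mu \vdash n$.

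Since $W_n$-modules over $\C$ are semisimple, an injection $H^{2k}(\lambda) \hookrightarrow H^{2k}(\lambda')$ of $W_n$-modules exists if and only if the multiplicities dominate, i.e.
\[
\dim \Hom_{W_n}\!\left(V_\mu, H^{2k}(\lambda)\right) \;\le\; \dim \Hom_{W_n}\!\left(V_\mu, H^{2k}(\lambda')\right)
\]
for every $\mu \vdash n$. So the whole statement is equivalent to proving this numerical inequality for all $\mu$ and all $k$. I would write $m^\mu_k(\nu) \colonequals \dim\Hom_{W_n}(V_\mu, H^{2k}(\nu))$ for the graded multiplicities.

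Next I would invoke the Hotta--Springer (and Lusztig) description of the graded $W_n$-module $\bigoplus_k H^{2k}(\nu)$. In the normalization used in this paper, for which the top cohomology of $\B_\nu$ realizes $V_\nu$, the graded multiplicity generating function is a Kostka--Foulkes polynomial:
\[
\sum_{k\ge0} m^\mu_k(\nu)\, q^k \;=\; \widetilde{K}_{\mu\nu}(q) \;=\; q^{n(\nu)}K_{\mu\nu}(q^{-1}),
\]
the cocharge version, where $n(\nu) = \sum_i (i-1)\nu_i$. (One checks $m^\mu_k(\nu)$ lives in degree $q^k \leftrightarrow H^{2k}$, e.g. $\widetilde K_{(2,1),(1^3)}(q)=q+q^2$ records $V_{(2,1)}$ in $H^2,H^4$ of the full flag variety.) Under this identification the required inequalities $m^\mu_k(\lambda)\le m^\mu_k(\lambda')$ for all $k$ become the single coefficient-wise domination
\[
\widetilde{K}_{\mu\lambda'}(q) - \widetilde{K}_{\mu\lambda}(q) \in \N[q] \qquad\text{whenever } \lambda \ge \lambda'.
\]

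The crux is this monotonicity of Kostka--Foulkes polynomials in the lower (content) index with respect to the dominance order. At $q=1$ it is the classical monotonicity of weight multiplicities in the irreducible $\gl_n$-module of highest weight $\mu$: moving a box down the diagram corresponds to subtracting a simple root, and the associated lowering operator embeds the heavier weight space into the lighter one, giving $K_{\mu\lambda}(1)\le K_{\mu\lambda'}(1)$. I would reduce the graded statement to the covering relations of the dominance order (it suffices to treat the case where $\lambda'$ is obtained from $\lambda$ by moving a single box down, since coefficient-wise domination is transitive and the dominance order on partitions of $n$ is connected by such covers) and then argue either combinatorially, via a cocharge-preserving injection of semistandard tableaux of shape $\mu$ from content $\lambda$ into content $\lambda'$, or geometrically, via Lusztig's identification of $\widetilde{K}_{\mu\nu}(q)$ with the Poincaré polynomial of the local intersection cohomology of a nilpotent orbit closure along smaller orbits, under which the inequality would reflect the growth of these local invariants at deeper strata. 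The main obstacle is precisely this $q$-graded refinement: the lowering operator that intertwines the two weight spaces does not commute with the principal-nilpotent (Brylinski) filtration computing the grading, so the classical injection does not respect the degree, and one must work harder to control it degree by degree. Once the monotonicity of Kostka--Foulkes polynomials is in hand, the lemma follows immediately from the two displayed reductions above.
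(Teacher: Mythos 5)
Your two reductions are sound: semisimplicity of $\C[W_n]$ correctly converts the existence of an injection into the multiplicity inequalities $m^\mu_k(\lambda)\le m^\mu_k(\lambda')$ for all $\mu\vdash n$ and all $k$, and the Hotta--Springer--Lusztig identification correctly converts those into the coefficient-wise domination $\widetilde{K}_{\mu\lambda'}(q)-\widetilde{K}_{\mu\lambda}(q)\in\N[q]$ whenever $\lambda\ge\lambda'$. The problem is that this last statement \emph{is} the lemma --- the two reductions are essentially reversible bookkeeping --- and you never prove it. At the decisive point you offer two alternative strategies (``argue either combinatorially, via a cocharge-preserving injection of semistandard tableaux \dots\ or geometrically, via Lusztig's identification \dots''), carry out neither, and then candidly identify why the naive attempt fails: the $\gl_n$ lowering operator that gives monotonicity at $q=1$ does not respect the Brylinski filtration, so the classical injection does not refine degree by degree. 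An unexecuted sketch together with an acknowledged obstruction is a genuine gap, and it sits exactly where the content of the lemma lives: graded monotonicity of Kostka--Foulkes polynomials is a real theorem, not a formality one can defer.

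For comparison, the paper disposes of the lemma by citing \cite[Section 4]{g-p}, where Garsia and Procesi prove precisely this monotonicity via their presentation of the cohomology rings of Springer fibers (the Tanisaki quotients) and explicit graded $W_n$-equivariant surjections between these rings, which dualize to the desired injections; the paper's remark also records the deeper geometric explanation, namely that by \cite{lu:green} the modified Kostka--Foulkes polynomials are Kazhdan--Lusztig polynomials of the extended affine Weyl group of type $A$, whose monotonicity is \cite[Corollary 3.7]{br-mac}. Your ``geometric'' alternative is essentially this second route, but the assertion that the local intersection cohomology of nilpotent orbit closures grows along deeper strata is exactly the Braden--MacPherson theorem, not an observation one may simply assert. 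To close the gap, either carry out the combinatorial cocharge-injection argument in full (which is nontrivial) or replace the sketch at the crux with a citation to one of the results above.
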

\begin{proof} See e.g. \cite[Section 4]{g-p}
\end{proof}
\begin{rmk} As noted in \cite[Section 4]{g-p}, this property is directly related to monotonicity of Kostka-Foulkes polynomials. Indeed, by \cite{lu:green} Kostka-Foulkes polynomials (with some modification) are special kinds of Kazhdan-Lusztig polynomials of the extended affine Weyl group for type $A$, whose monotonicity is proved in \cite[Corollary 3.7]{br-mac}.
\end{rmk}

\begin{thm} Let $\lambda$ and $\lambda'$ be partitions with $|\lambda|=|\lambda'|$. If there exists $r$ such that $\lambda$ and $\lambda'$ consist of $\leq r$ parts and $\lambda, \lambda' \supset A_{k,r}$, then $H^{2k}(\lambda)$ and $H^{2k}(\lambda')$ are the same $W_{|\lambda|}$-representation of dimension $f_{k,r}(|\lambda|)$, where $f_{k, r}(x)$ is defined in Theorem \ref{thm:dim}.
\end{thm}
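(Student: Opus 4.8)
The plan is to reduce everything to the two facts already available: Theorem~\ref{thm:dim}, which guarantees that every partition in the relevant family has the \emph{same} Betti number $f_{k,r}(|\lambda|)$, and Lemma~\ref{lem:mono}, which converts a dominance comparison into an injection of $W_{|\lambda|}$-modules. The difficulty is that $\lambda$ and $\lambda'$ need not be comparable in dominance order, so I cannot feed them directly into Lemma~\ref{lem:mono}. Instead I would route both comparisons through a single partition that dominates all admissible ones.

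Set $n = |\lambda| = |\lambda'|$ and let $S$ denote the set of partitions $\mu$ of $n$ with at most $r$ parts such that $\mu \supset A_{k,r}$. First I would exhibit a dominance-maximum element $\mu^* \in S$, namely
\[
\mu^* \colonequals \brr{\, n - \textstyle\sum_{i=2}^r \alpha^{k,r}_i,\ \alpha^{k,r}_2,\ \cdots,\ \alpha^{k,r}_r \,}.
\]
One checks that $\mu^*$ is a genuine partition with at most $r$ parts lying in $S$: since $\lambda \supset A_{k,r}$ forces $n \geq |A_{k,r}|$, the first part satisfies $\mu^*_1 \geq \alpha^{k,r}_1 = \alpha^{k,r}_2 = \mu^*_2$, so $\mu^*$ is weakly decreasing and contains $A_{k,r}$. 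Moreover, for every $\mu \in S$ and every $j \geq 1$,
\[
\sum_{i=1}^j \mu_i = n - \sum_{i=j+1}^r \mu_i \leq n - \sum_{i=j+1}^r \alpha^{k,r}_i = \sum_{i=1}^j \mu^*_i,
\]
using $\mu_i \geq \alpha^{k,r}_i$; hence $\mu^* \geq \mu$ in dominance order for every $\mu \in S$.

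With $\mu^*$ in hand the argument finishes quickly. By Theorem~\ref{thm:dim} every $\mu \in S$ satisfies $h^{2k}(\mu) = f_{k,r}(n)$; in particular $\dim H^{2k}(\mu^*) = \dim H^{2k}(\mu)$. Since $\mu^* \geq \mu$, Lemma~\ref{lem:mono} supplies an injective map $H^{2k}(\mu^*) \hookrightarrow H^{2k}(\mu)$ of $W_n$-modules, and equality of dimensions forces it to be an isomorphism. Applying this to $\mu = \lambda$ and $\mu = \lambda'$ gives $H^{2k}(\lambda) \cong H^{2k}(\mu^*) \cong H^{2k}(\lambda')$ as $W_n$-modules, of dimension $f_{k,r}(n)$, as required.

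I expect the only real obstacle to be the verification that a common dominance upper bound exists \emph{within} the admissible family $S$, i.e. that $\mu^*$ simultaneously dominates every $\mu \in S$ and itself obeys the part-count and containment constraints. The partial-sum computation above settles the dominance claim, and the inequality $n \geq |A_{k,r}|$ (automatic from $\lambda \supset A_{k,r}$) settles membership; once these are confirmed, the passage from injections to isomorphisms is purely a matter of matching dimensions via Theorem~\ref{thm:dim}.
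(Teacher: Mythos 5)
Your proposal is correct and takes essentially the same approach as the paper: your $\mu^*$ is precisely the paper's $\lambda_{max} = (|\lambda|-|A_{k,r}|+\alpha_{1}^{k,r}, \alpha_{2}^{k,r}, \cdots, \alpha_r^{k,r})$, and both arguments route the comparison of $\lambda$ and $\lambda'$ through this common dominating partition, using Theorem \ref{thm:dim} for equality of dimensions and Lemma \ref{lem:mono} to upgrade injections into isomorphisms. The only difference is that you explicitly verify, via partial sums, that $\mu^*$ dominates every admissible partition and itself lies in the admissible family, details the paper asserts without proof.
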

\begin{proof} By Theorem \ref{thm:dim}, $h^{2k}(\lambda)=h^{2k}(\lambda')=f_{k,r}(|\lambda|)$. Thus if $\lambda$ and $\lambda'$ are comparable with respect to dominance order, then the result directly follows from Lemma \ref{lem:mono}. However, we can always find $\lambda_{max}$ such that $|\lambda_{max}|=|\lambda|=|\lambda'|$, $\lambda_{max} \supset A_{k,r}$, and $\lambda_{max} \geq \lambda, \lambda'$ by setting
$$\lambda_{max} \colonequals (|\lambda|-A_{k,r}+\alpha_{1}^{k,r}, \alpha_{2}^{k,r}, \cdots, \alpha_r^{k,r}).$$
In other words, $\lambda_{max}$ is obtained from $A_{k,r}$ by attaching $|\lambda|-|A_{k,r}|$ number of boxes to the first row of the Young diagram of $A_{k,r}$. Thus the result still follows from Lemma \ref{lem:mono}.
\end{proof}
If $r\rightarrow \infty$, then we have the following corollary.
\begin{cor} \label{cor:strinf} Let $\lambda$ be partitions of $\geq k+1$ parts. Then $H^{2k}(\lambda)$ and $H^{2k}(\B_{|\lambda|})$ are the same  $W_{|\lambda|}$-modules of degree $f_k(x)$, where $f_k(x) \colonequals \lim_{r \rightarrow \infty} f_{k,r}(x)$.
\end{cor}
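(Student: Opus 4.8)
The plan is to deduce this corollary from the preceding theorem (the finite-$r$ version) simply by letting $r$ grow, once I identify $\B_{|\lambda|}$ with a Springer fiber. First I would set $n = |\lambda|$ and recall that the unipotent element $u_{(1^n)}$ is the identity of $GL_n(\C)$, whose Springer fiber is the whole flag variety $\B_n$. Hence $H^{2k}(\B_n) = H^{2k}((1^n))$ as $W_n$-modules, and the task reduces to comparing $H^{2k}(\lambda)$ with $H^{2k}((1^n))$.

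Next I would choose any $r \geq n$ and apply the main theorem to the pair $\lambda, (1^n)$. Since $\lambda$ has at least $k+1$ (nonzero) parts and $|\lambda| = n$, we have $n \geq k+1$, so $r \geq n > k$. As observed just before Corollary \ref{cor:rinf}, for $r > k$ one has $A_{k,r} = (1^{k+1})$ and $f_{k,r}(x) = f_k(x)$ independent of $r$, which is exactly what makes the limiting polynomial $f_k$ meaningful. I would then check the two hypotheses of the main theorem: both partitions consist of $\leq r$ parts, since $(1^n)$ has exactly $n \leq r$ parts and $\lambda$ has at most $n$ parts; and both contain $A_{k,r} = (1^{k+1})$, the partition $\lambda$ because its first $k+1$ parts are $\geq 1$ by hypothesis, and $(1^n)$ because $n \geq k+1$.

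With these verifications in place, the main theorem yields directly that $H^{2k}(\lambda)$ and $H^{2k}((1^n)) = H^{2k}(\B_n)$ are the same $W_n$-module, of dimension $f_{k,r}(n) = f_k(n)$, which completes the argument. I do not expect any genuine obstacle here; the only two points deserving care are the identification of $\B_n$ as the $(1^n)$-Springer fiber and the stabilization $f_{k,r} = f_k$ for all $r > k$, and both are immediate from the definitions and the remark preceding Corollary \ref{cor:rinf}.
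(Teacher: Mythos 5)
Your proof is correct, but it is routed differently from the paper's. The paper deduces the corollary directly from Corollary \ref{cor:rinf} together with Lemma \ref{lem:mono}: Corollary \ref{cor:rinf} already gives $h^{2k}(\lambda) = f_k(|\lambda|) = h^{2k}(\B_{|\lambda|})$, and since $\lambda \geq (1^n)$ in dominance order for \emph{every} partition $\lambda$ of $n$, Lemma \ref{lem:mono} gives an injection $H^{2k}(\lambda) \hookrightarrow H^{2k}((1^n)) = H^{2k}(\B_n)$, which the dimension equality forces to be an isomorphism. You instead specialize the main theorem of Section 4 to the pair $(\lambda, (1^n))$ with $r \geq n$, checking that both partitions have $\leq r$ parts and contain $A_{k,r} = (1^{k+1})$ (using $k < r$), and that $f_{k,r} = f_k$ in this range. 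The two arguments rest on the same pillars --- the dimension stability of Theorem \ref{thm:dim} and the monotonicity injection of Lemma \ref{lem:mono}, the latter entering your version only implicitly through the proof of the main theorem via the auxiliary partition $\lambda_{max}$ --- so the mathematical content is equivalent. Your route buys a single-citation derivation with pure hypothesis-checking; the paper's route is marginally more economical, needing only one application of Lemma \ref{lem:mono} and no auxiliary dominating partition, since $(1^n)$ is the dominance-minimal element. Both versions correctly handle the one identification that matters, namely that $\B_n$ is the Springer fiber of the identity element $u_{(1^n)}$.
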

\begin{proof} It follows from Corollary \ref{cor:rinf} and Lemma \ref{lem:mono}.
\end{proof}

\begin{rmk} \cite[Theorem 7.1]{c-f} proved that $\{H^k(\B_n))\}_{n\in \N}$ is (uniformly) representation stable for each $k$. Now consider a sequence of partitions $\lambda^1 \subset \lambda^2 \subset \cdots$ which has eventually $\geq k+1$ parts, i.e. there exists $N \in \N$ such that $\lambda_N$ has $\geq k+1$ parts. Then Corollary \ref{cor:strinf} implies that the sequence $\{H^{2k}(\lambda^n)\}_{n\in\N}$ is eventually the same as $\{H^{2k}(\B_n)\}_{n \in \N}$ as representations of symmetric groups. Thus in this case we recover the result of \cite{mt}.
\end{rmk}

\bibliographystyle{amsalphacopy}
\bibliography{stable}

\end{document}